\documentclass[12pt]{amsart}

%%%%%%%%%%%%%%%%%%%%%%%%%

\usepackage{amsfonts,amsmath,latexsym,amssymb,verbatim,amsbsy}
\usepackage{amsthm}

\usepackage{pstricks}
%\usepackage{graphicx}

%\usepackage[notcite]{showkeys}

%%%%%%%%%%%%%%%%%%%%%%%%%
%\setlength{\topmargin}{-20pt} %\setlength{\rightmargin}{-50pt}
%\setlength{\textheight}{660pt}\setlength{\textwidth}{430pt}

\theoremstyle{plain}
\newtheorem{THEOREM}{Theorem}[section]

\newtheorem{theorem}[THEOREM]{Theorem}
\newtheorem{corollary}[THEOREM]{Corollary}
\newtheorem{lemma}[THEOREM]{Lemma}

\theoremstyle{definition}

\theoremstyle{remark}

%%%%%%%%%%%%%%%%%%%%%%%%%

\newcommand{\thm}[1]{Theorem~\ref{#1}}
\newcommand{\lem}[1]{Lemma~\ref{#1}}

\newcommand{\bb}{\begin{equation}}
\newcommand{\ee}{\end{equation}}
\newcommand{\bq}{\begin{eqnarray}}
\newcommand{\eq}{\end{eqnarray}}
\newcommand{\bqn}{\begin{eqnarray*}}
\newcommand{\eqn}{\end{eqnarray*}}

%%%%%%%%%%%%%%%%%%%%%%%%%

\newcommand{\N}{\ensuremath{\mathbb{N}}}   %%% naturals
   %%% integers
\newcommand{\R}{\ensuremath{\mathbb{R}}}   %%% reals
   %%% complex
   %%% torus

%%%%%%%%%%%%%%% GREEKS %%%%%%%%%%%%%%%5

\def \a {\alpha}
\def \b {\beta}
\def \d {\delta}

\def \e {\varepsilon}

\def \n {\nabla}
\def \s {\sigma}
\def \r {\rho}

\def \D {\Delta}

\def \t {\tau}
\def \w {\omega}
\def \O {\Omega}

\def \H {\mathcal{H}}
\def \E {\mathcal{E}}

%\def \BC {\bB \otimes \bC^{\otimes m}}

%%%%%%%%%%%%%%%%%%%%%% Shortcuts %%%%%%%%%%%%%%%%%%%%%%

\def \ra {\rightarrow}
\def \ss {\subset}

%\newcommand{\sob}[1]{H^{#1}(\torus)}

%\providecommand{\Sigma}{\mathsf{\Sigma}}

%%%%%%%%%%%% MY FUNCTIONS %%%%%%%%%%%%55

 %
 %
 %
 %
\DeclareMathOperator{\dist}{dist} %

\begin{document}

\title[Energy concentration and drain]{A study of energy concentration and drain  in incompressible fluids}
\author{Roman Shvydkoy}
\thanks{This work was partially supported
by NSF grant DMS--0907812.}
\address[R. Shvydkoy]{University of Illinois at Chicago, Department of Mathematics
(M/C 249), Chicago, IL 60607, USA} %
\email{shvydkoy@math.uic.edu}

\subjclass[2000]{Primary:76B03; Secondary:35Q31}

\begin{abstract} In this paper we examine two opposite scenarios of energy behavior for solutions of the Euler equation. We show that if $u$ is a regular solution on a time interval $[0,T)$ and if $u \in L^rL^\infty$ for some $r\geq \frac{2}{N}+1$, where $N$ is the dimension of the fluid, then the energy at the time $T$ cannot concentrate on a set of Hausdorff dimension smaller than $N - \frac{2}{r-1}$. The same holds for solutions of the three-dimensional Navier-Stokes equation in the range $5/3<r<7/4$. Oppositely, if the energy vanishes on a subregion of a fluid domain, it must vanish faster than $(T-t)^{1-\d}$, for any $\d>0$. The results are applied to find new exclusions of locally self-similar blow-up in cases not covered previously in the literature.
\end{abstract}

\maketitle

\section{Introduction}

We consider evolution of an incompressible $N$-dimensional ideal fluid governed by the Euler equations
\begin{equation}\label{ee}
\begin{split}
u_t + u \cdot \n u + \n p & =0\\
\n \cdot u & = 0,\\
u(t=0)& = u_0.
\end{split}
\end{equation}
Here $u$ is the velocity field, $p$ internal pressure, and we assume the fluid domain is $\R^N$. For any initial condition $u_0 \in H^{N/2 +1+ \e}$ there exists a unique local-in-time  solution $u \in C([0,T);  H^{N/2+1 + \e})$ with the associated pressure given by
\begin{equation}\label{p}
p(t,x) = - \frac{|u(t,x)|^2}{N} + P.V. \int_{\R^N} K_{ij}(x-y) u_i(y)u_j(y)
\, dy,
\end{equation}
where $K_{ij}(y) = \frac{N y_i y_j - \d_{ij}|y|^2}{N \w_N |y|^{N+2}}$, and $\w_N = 2\pi^{N/2}(N\Gamma(N/2))^{-1}$ is the volume of the unit ball in $\R^N$. A recent trend in the global regularity problem for \eqref{ee} is to rule out model scenarios of blow-up that arise in numerical simulations.  The model of particular relevance to this note is the locally self-similar blow-up given by 
\begin{equation}\label{e:ss}
\begin{split}
u(x,t)  &= \frac{1}{(T - t)^{\frac{\a}{1+\a}}}\, v\left( \frac{x-x_0}{(T - t)^{\frac{1}{1+\a}}}\right),\\
 p(x,t) & =  \frac{1}{(T - t)^{\frac{2\a}{1+\a}}}\, q\left( \frac{x-x_0}{(T - t)^{\frac{1}{1+\a}}}\right),
\end{split}
\end{equation}
for $|x-x_0|<\rho_0$, $t<T$, and $\a>-1$ (focusing case). These solutions emerge, for instance, in vortex line models of Kida's high-symmetry flows (see Pelz and others \cite{bp,kimura,ngb,pelz}), although previously self-similar blow-up has been observed as well, \cite{kerr,bmvps}. In a recent joint effort with D. Chae \cite{cs} (see also \cite{chae-11,he-ext,schonbek}) solutions of the form \eqref{e:ss} have been ruled out under additional integrability condition, $v \in L^p(\R^N) \cap C^1_{loc}(\R^N)$, $p\geq 3$, in the range $-1 < \a \leq \frac{N}{p}$ and $\a > \frac{N}{2}$. In the energy conservative scaling $\a = \frac{N}{2}$, self-similar solutions are excluded provided $v\in L^2$ and the power bounds
$
\frac{1}{|y|^{N+1-\d}} \lesssim |v(y)| \lesssim |y|^{1-\d}
$
hold at infinity. The range $\frac{N}{p} < \a < \frac{N}{2}$, or just $\a < \frac{N}{2}$ if no $L^p$-condition is assumed, remains open at the moment. We remark that finiteness of the total energy of $u$ requires $v$ to satisfy the energy growth bound
$
\int_{|y| \leq L } |v(y)|^2\, dy \lesssim L^{N-2\a}$, instead of $v \in L^2$.
It follows from the arguments of \cite{cs}, that a slightly better bound
\begin{equation}\label{enweak}
\int_{|y| \leq L } |v(y)|^2\, dy \lesssim L^{N-2\a}o(1),
\end{equation}
along with $v\in L^p$, $p \geq 3$, implies $v = 0$. 

The main motivation of this present work is to understand the general energetics of the Euler system that lies behind the results of \cite{cs}, and consequently to exclude new cases of self-similar blow-up in the range $\a \leq \frac{N}{2}$.  To illustrate the thrust of what follows let us consider a self-similar solution with $\a = \frac{N}{2}$. As $t \ra T$, the energy density $|u(x,t)|^2$ tends weakly to the Dirac mass at $x_0$. We see two different types of anomaly: energy concentration on the "small" set $\{x_0\}$, and energy drain elsewhere. One can study these phenomena for general solutions of \eqref{ee} by introducing an energy measure at time $T$. Suppose that $u$ is a smooth solution of \eqref{ee} on the interval $[0,T)$. Then the local energy equality
\begin{equation}\label{locen}
\begin{split}
\int_{\O} |u(t,x)|^2 \s(x) \, dx = \int_{\O} |u_0(x)|^2 \s(x) \, dx  \\
+ \int_{0}^{t}  \int_{\O} ( |u|^2 + 2p) u \cdot \n \s \, dx\, d\t,
\end{split}
\end{equation}
holds for all $0<t<T$, and $\s \in C^\infty_0(\R^N)$. If, in addition, $u \in L^3([0,T); L^3(\O))$ on some subdomain $\O \ss \R^N$, then \eqref{locen} guarantees that the limit of the right hand side exists as $t \ra T$ for all $\s \in C^\infty_0( \O)$, and hence, so does the limit on the left hand side. It therefore defines a non-negative measure on $\O$, which we call the energy measure and denote by $\E_T$. If the solution does not loose smoothness at time $T$, then trivially, 
\begin{equation}\label{energyreg}
d\E_T (x) = |u(T,x)|^2 dx.
\end{equation}
Thus, deviation from \eqref{energyreg} can be viewed as a measure of severity of the blow-up. One way to quantify this deviation is to consider how low the Hausdorff dimension of a set of positive $d\E_T$-measure can be: 
\begin{equation}\label{dim}
d_T = \inf \{ d\geq 0: \exists S \subset \O, \dim_{\H}(S) \leq d, \E_T(S)>0 \}.
\end{equation}
In \thm{t:r} we will prove that the size of $d_T$ can be controlled from below by the growth of $L^\infty$-norm at the time of blow-up: if $u\in L^rL^\infty(\O)$, for  some $r\geq \frac{2}{N}+1$, then $d_T \geq N - \frac{2}{r-1}$; and $\E_T$ has no atoms if $r = \frac{2}{N}+1$ (a more general statement is given in \lem{l:atom}). In the case of a self-similar solution, this translates into the following statement: if $v \in L^\infty$ in \eqref{e:ss}, then the energy at time $T$ cannot concentrate on sets of dimension smaller than $N-2\a$. Unfortunately, our technique does not rule out concentration to a point under milder condition $\|u(\cdot)\|_\infty \in  L^{\frac{2}{N}+1}_{weak}$ which is the kind of condition that appears in the case $\a = \frac{N}{2}$. Instead, we will examine this case, as well as $\a \leq N/2$, from the point of view of the opposite phenomenon -- the energy drain. 

In \thm{t:merger} we interpret energy drain as a merger of two solutions, given and the trivial one.  As a consequence of \thm{t:merger}, if $u \in L^r L^\infty$, for some $r>1$, and $\|u(t)\|_2 \ra 0 $ on a domain $\O$ at time $T$, then the following improvement occurs $\|u(t)\|_2 \leq C_\d (T-t)^{1-\d}$, for all $\d>0$. This can be interpreted as curbing the influence of pressure on local uniqueness. Let us now consider self-similar blow-up in the case $\a = N/2$. Condition $v\in L^2$ 
ensures drain of energy  in the annulus $\r_0/2 < |x-x_0| < \r_0$. Hence, the improved rate in self-similar variables translates into the bound 
$\int_{L < |y| < 2L} |v|^2\, dy \lesssim L^{-N-2 - \d}$. This rate of decay of energy is strong enough to put $v$ automatically in all $L^p$ spaces for $\frac{N}{N+1} < p <N+4+\d$. Obviously, the lower bound $|v| \geq |y|^{-N-1+\d}$, even in a sector, is inconsistent with these implications. We thus obtain a more robust exclusion condition. In the case $\a <N/2$,  condition \eqref{enweak} ensures energy drain in the entire region of self-similarity $|x-x_0| < \r_0$. The improved rate gives the bound  $\int_{|y| \leq L} |v|^2\, dy \lesssim L^{N-2 -4\a + \d}$, for all $\d>0$. This implies $v=0$ in the range $\frac{N-2}{4} < \a < \frac{N}{2}$, without any $L^p$-condition as previously considered in \cite{cs}. The full list of exclusions based on energy drain is stated in Corollary~\ref{c:ss}.

Adaptations of the general results above to the Navier-Stokes system is given in Section~\ref{s:NSE}. We show that the estimates on the linear term in many cases are subordinate to those obtained for the non-linear term. However we believe that a more meaningful use of the parabolic nature of the equation may improve the results.

\section{Energy concentration}

\begin{theorem}\label{t:r}
Suppose $u \in L^r([0,T); L^\infty(\O))$ for some $r\geq \frac{2}{N}+1$. Then $d_T \geq N - \frac{2}{r-1}$. Moreover, if $r = \frac{2}{N}+1$, then $\E_T$ has no atoms.
\end{theorem}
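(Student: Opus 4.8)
The plan is to reduce the dimension bound to a single-scale estimate on the energy measure of a ball and then invoke the mass distribution principle. Fix $x_0\in\O$ and a cutoff $\s_\r\in C^\infty_0(B(x_0,2\r))$ with $\s_\r\equiv 1$ on $B(x_0,\r)$ and $|\n\s_\r|\lesssim\r^{-1}$, and set $E_\r(t)=\int|u(t)|^2\s_\r\,dx$. Since $\E_T(\overline{B(x_0,\r)})\le\lim_{t\ra T}E_\r(t)=:E_\r(T)$ and $\E_T(\{x_0\})=\lim_{\r\ra 0}E_\r(T)$, everything follows once I establish the scaling bound
\[
E_\r(T)\ \lesssim\ \Big(\int_0^T\|u(\t)\|_\infty^r\,d\t\Big)^{\frac{2}{r-1}}\,\r^{\,N-\frac{2}{r-1}}
\]
with a constant independent of $x_0$. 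The exponent $s:=N-\frac{2}{r-1}$ is exactly the one forced by the scaling symmetry of \eqref{ee}, which is the quickest way to guess it.

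The key step is the flux estimate. Differentiating \eqref{locen} gives $\frac{d}{dt}E_\r=\int(|u|^2+2p)\,u\cdot\n\s_\r\,dx$, an integral supported in the annulus $\{\r\le|x-x_0|\le2\r\}$ on which $|\n\s_\r|\lesssim\r^{-1}$. Because $\n\cdot u=0$ I may subtract any constant from $p$, so the flux is bounded by $\r^{-1}\|u\|_\infty\big(\int_{B(x_0,2\r)}|u|^2+\int_{B(x_0,2\r)}|p-\bar p|\big)$. For the pressure I split the quadratic source in \eqref{p} with a cutoff at scale $\r$: the near part is estimated in $L^2$ by Calder\'on--Zygmund boundedness, which reproduces the local energy, while the oscillation of the far part over $B(x_0,2\r)$ is controlled by the decay $|\n K_{ij}(z)|\lesssim|z|^{-N-1}$ summed over dyadic shells. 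The outcome is a bound of the schematic form $|\frac{d}{dt}E_\r|\lesssim\r^{-1}\|u(t)\|_\infty\,E_{c\r}(t)$, i.e. the flux is controlled by $\|u\|_\infty/\r$ times the local energy at a comparable scale $c\r$.

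The remaining difficulty --- and the main obstacle --- is that $\|u\|_\infty$ lies only in $L^r$ with $r\le 3$, so one cannot insert the crude bound $E_{c\r}\le\|u\|_\infty^2\r^N$ and integrate. Instead I integrate the differential inequality from $t$ to $T$ and then average $t$ over a shrinking window $(T-h,T)$: the window-average of $E_\r(t)$ is estimated through $\int_{T-h}^T\|u\|_\infty^r$ (directly when $r\ge2$, and after interpolating $E_\r\le\|u\|_\infty^r\,\r^{Nr/2}\|u_0\|_2^{2-r}$ when $r<2$), while the flux integral is estimated through $\int_{T-h}^T\|u\|_\infty$ by H\"older. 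Choosing $h\sim\r^{r/(r-1)}$ balances the two contributions and reproduces the displayed scaling bound. When $r\ge2$, and in the critical case $r=\frac{2}{N}+1$, this closes at once; for $\frac{2}{N}+1<r<2$ the appearance of the larger scale $c\r$ on the right forces an additional induction over the dyadic scales $\r,2\r,4\r,\dots$, terminated by the finiteness of the total energy, to remove the scale mismatch. This coupling of spatial scales against the non-integrability of $\|u\|_\infty^2$ is the delicate point I expect to absorb most of the work.

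Finally, the uniform bound $\E_T(B(x_0,\r))\lesssim C\r^s$ feeds into the mass distribution principle (Frostman): it yields $\E_T(S)\lesssim\H^s(S)$ for every Borel $S\ss\O$, so any $S$ with $\dim_{\H}S<s$ is $\E_T$-null, whence $d_T\ge s$. In the borderline case $r=\frac{2}{N}+1$ one has $s=0$ and the bound only gives finiteness; to obtain absence of atoms I rerun the window argument retaining the tail $\int_{T-h}^T\|u\|_\infty^r$, which tends to $0$ as $h\ra0$ since $\|u\|_\infty\in L^r$, and let $h=h(\r)\ra0$ as $\r\ra0$ to conclude $E_\r(T)\ra0$, i.e. $\E_T(\{x_0\})=0$; this is the sharper statement recorded in \lem{l:atom}.
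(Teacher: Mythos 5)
Your overall architecture matches the paper's: local energy flux through a cutoff at scale $\r$, a near/far splitting of the pressure with Calder\'on--Zygmund for the near part and dyadic decay for the far part, a time window of size $h\sim\r^{\frac{r}{r-1}}$, and a mass-distribution/covering argument to pass from the ball estimate to the Hausdorff dimension bound (your treatment of the borderline case $r=\frac2N+1$ is also in the spirit of the paper's Lemma~\ref{l:atom}). But the proposal stops exactly where the real work begins, and two of its reductions are not actually available. First, the schematic flux bound $|\frac{d}{dt}E_\r|\lesssim \r^{-1}\|u\|_\infty E_{c\r}$ is not what the far-field pressure gives you: even after subtracting a constant and using $|\n K(z)|\lesssim |z|^{-N-1}$, the far part contributes $\r^{-N}\sum_k 2^{-Nk}E(\t,2^k\r)$, i.e.\ the energy at \emph{all} larger dyadic scales, not one comparable scale. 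Since $\|u\|_\infty$ is only in $L^r$ with $r$ possibly close to $\frac2N+1$, a single integration of this inequality cannot close (the crude bound $E\le \|u\|_2^2$ only yields $\r^{-1}F(t)\sim 1$), and the paper's proof of Lemma~\ref{l:par} consists precisely of substituting the inequality into itself $M$ times, tracking the multi-scale sums and the factorials $\frac{1}{k!}(F(t)/\r)^k$ so that the series closes to $\exp(F(t)/\r)$ with $F(t)/\r\lesssim\r^{\d'}$, and choosing $M\d>N-\frac{2}{r-1}$ to kill the remainder $(F/\r)^M$. You acknowledge this iteration but defer it entirely, and moreover claim it is only needed for $\frac2N+1<r<2$; it is in fact needed for every $r>\frac2N+1$ (for Euler, $u\in L^2L^\infty$ gives no regularity, and $\r^{-1}\int f\,E(\t,c\r)\,d\t$ does not close in one step for $r\ge 2$ either).

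Second, your handling of the initial-data term at the top of the window is quantitatively insufficient in the range $\frac2N+1<r<2$: averaging $E_\r(t)\le \|u\|_\infty^r\r^{Nr/2}\|u_0\|_2^{2-r}$ over $(T-h,T)$ with $h\sim\r^{\frac{r}{r-1}}$ yields $\r^{\frac{Nr}{2}-\frac{r}{r-1}}$, and the exponent falls short of the target $N-\frac{2}{r-1}$ by $(2-r)\bigl(\frac N2-\frac{1}{r-1}\bigr)>0$ in that range. The paper avoids this by selecting a good time $t_0\in[t/2,t]$ with $f^r(t_0)\le 1/t_0$ (which exists since otherwise $\int f^r$ diverges logarithmically) and using $E(t_0,l)\le l^N f^2(t_0)$, which does hit the right power. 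So while your plan identifies the correct exponents and the correct mechanism, the multi-scale iteration and the good-time selection --- which together constitute essentially all of the paper's proof of Lemma~\ref{l:par} --- are missing, and as written the single-scale, single-integration version would fail.
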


\subsection{Case $r = \frac{2}{N}+1$} The case $r = \frac{2}{N}+1$ is in fact straightforward. Suppose $ \E_T(\{ x_0\}) = \e_0 >0$. Let us fix a small $\r >0$ so that $B_{4\r}(x_0) \ss \O$.
Let us fix a $C^\infty$-function $0 \leq \s \leq 1$ with $\s(x) = 1$ on $B_{1/2}(0)$ and $\s(x) = 0$ on $\R^N \backslash B_1(0)$. Let $\s_\r(x) = \s((x-x_0)/\r)$. From the energy equality  \eqref{locen} we obtain for all $t < T$
\[
\begin{split}
\e_0 &\leq \int_\O |u(t,x)|^2 \s_\r(x)\, dx + \int_t^T \int_\O (|u|^2 + 2p)u \cdot \n \s_\r\, dx d\t \\
&\leq C \int_{|x-x_0|\leq \rho} |u(t,x)|^2 \, dx + \frac{C}{\r}\int_t^T \int_{|x-x_0| \leq \r} (|u|^3 + |u||p|)\, dxd\t.
\end{split}
\]
Note that $L^{\frac{2}{N}+1}L^\infty \cap L^\infty L^2 \ss L^3L^{\frac{3N}{N-1}}$. Thus,
\[
\frac{1}{\r}\int_t^T \int_{|x-x_0| \leq \r} (|u|^3 + |u||p|)\, dxd\t \leq \|u\|^3_{L^3([t,T);L^{\frac{3N}{N-1}})}.
\]
So, letting $\r \ra 0$ first and then $t \ra T$, we arrive at the contradictory statement $\e_0 = 0$.

Further analysis reveals that if energy concentration is to occur it has to happen over a family of shrinking balls. 
\begin{lemma}\label{l:atom} Suppose $u \in L^1([0,T); L^\infty(\O))$. Suppose that for any $A>0$,
\begin{equation}\label{enlimit}
\liminf_{t\ra T}\int_{|x-x_0| \leq A \int_t^T \|u(\t)\|_\infty d\t} |u(t,x)|^2\, dx = 0.
\end{equation}
Then $\E_T$ has no atom at $\{x_0\}$.
\end{lemma}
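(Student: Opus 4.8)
The plan is to run the local energy balance \eqref{locen} against a \emph{time-dependent}, shrinking cutoff adapted to the natural light-cone scale $L(t) = \int_t^T\|u(\t)\|_\infty\,d\t$, which $\ra 0$ as $t\ra T$ since $u\in L^1 L^\infty$. First I would upgrade \eqref{locen} to time-dependent test functions: the smoothness of $u$ on $[0,T)$ gives, for $\s=\s(t,x)\in C^\infty_0$,
\[
\frac{d}{dt}\int_\O|u|^2\s\,dx = \int_\O(|u|^2+2p)\,u\cdot\n\s\,dx + \int_\O|u|^2\,\p_t\s\,dx.
\]
Then I would take $\s(t,x) = \psi\!\left(\frac{x-x_0}{A\,L(t)}\right)$, with $\psi$ a fixed radial profile, $\psi\equiv 1$ on $B_1$, $\supp\psi\ss B_2$, $\psi$ nonincreasing, and $A\geq 1$ a parameter to be chosen.

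The key computation is that the advective flux $\int|u|^2 u\cdot\n\s$ and the cutoff-time-derivative term $\int|u|^2\p_t\s$ combine with a favorable sign. On $\supp\n\s$ one has $|x-x_0|\in[AL,2AL]$, $\n\s$ points inward, and $\p_t\s = \psi'(\cdot)\,|x-x_0|\,\frac{A\|u\|_\infty}{(AL)^2}$, so $\p_t\s$ dominates $u\cdot\n\s$ once the shrink speed $A\|u\|_\infty$ exceeds the advection speed $\|u(t)\|_\infty$. Splitting $p = -\frac{|u|^2}{N} + p_{\mathrm{nl}}$ (the local part only reinforces the advective term, with coefficient $\frac{N-2}{N}<1$), the combined integrand carries the factor $\psi'(\cdot)\big[(A-\tfrac{N-2}{N})\|u\|_\infty\big]\leq 0$ for $A\geq 1$, and I am left with
\[
\frac{d}{dt}\int_\O|u|^2\s\,dx\leq 2\int_\O p_{\mathrm{nl}}\,u\cdot\n\s\,dx.
\]
Using $\n\cdot u = 0$ I may subtract from $p_{\mathrm{nl}}$ its average over $B_{2AL(t)}(x_0)$, bounding the right side by $\frac{\|u\|_\infty}{AL}\int_{B_{2AL}}|p_{\mathrm{nl}} - \overline{p_{\mathrm{nl}}}|$; a singular-integral (BMO) estimate together with a near/far decomposition of the Riesz integral controls this oscillation by the local energy $\int_{B_{CAL}(x_0)}|u|^2$ plus a weighted far-field tail.

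Finally I would argue by contradiction. Assuming $\E_T(\{x_0\}) = \e_0 > 0$, a companion \emph{fixed} cutoff and the weak-$*$ lower semicontinuity of $|u(t)|^2\,dx\rightharpoonup d\E_T$ should show that along a suitable sequence of times near $T$ the mass $\int_\O|u(t)|^2\s(t)\,dx$ is at least $\e_0 - o(1)$; integrating the displayed differential inequality backward from $T$ would then force $\int_{B_{2AL(t)}}|u(t)|^2\,dx\geq\e_0/2$ for all $t$ close to $T$ and some fixed $A$, contradicting \eqref{enlimit}.

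The hard part will be the nonlocal pressure flux. Since we assume only $u\in L^1 L^\infty$, the crude estimate $\int_t^T\|u\|_\infty^3(AL)^{N-1}\,d\t$ for $\int_t^T 2\int_\O p_{\mathrm{nl}}\,u\cdot\n\s$ need not even be finite (e.g. for $\|u(t)\|_\infty\sim(T-t)^{-a}$ with $\tfrac{N}{N+2}<a<1$), so the transport sign alone does not close the estimate. Making the argument work therefore hinges on exploiting \eqref{enlimit} itself: the near-field part of the pressure oscillation is governed by precisely the shrinking-ball energy that the hypothesis drives to zero. Consequently the pressure control and the ``capture the atom'' step are genuinely intertwined and must be carried out together, e.g.\ along a common sequence $t_k\ra T$ realizing the liminf in \eqref{enlimit}, with the far-field tail absorbed using the finiteness of the total energy.
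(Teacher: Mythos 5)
Your proposal stops exactly where the proof has to be done: you identify the nonlocal pressure flux as ``the hard part,'' observe correctly that your own bound for it (pulling $\|u\|_{\infty}$ out front and estimating $\int_{B_{2AL}}|p_{\mathrm{nl}}-\overline{p_{\mathrm{nl}}}|$ in $L^1$) produces two factors of $\|u(\t)\|_\infty$ and therefore need not be time-integrable under $u\in L^1L^\infty$, and then leave the resolution as a hope that \eqref{enlimit} will rescue it. It will not: \eqref{enlimit} is a statement about the energy in a shrinking ball at single times along a liminf sequence, whereas the pressure flux is a time integral over all of $[t,T)$; there is no mechanism by which the former controls the latter, and your backward integration of the differential inequality from $T$ is circular until that integral is shown finite. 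So as written the argument does not close.

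The missing idea is a different pairing in Cauchy--Schwarz, and it makes the whole construction much lighter. Keep a \emph{fixed} spatial cutoff $\s_\r$ (only the radius $\r$ will later be chosen depending on $t$), write the flux as $\frac{C}{\r}\int_t^T\int_{B_\r}(|u|^3+|u||p|)$, and bound every piece by $C\,E\,\|u(\t)\|_\infty$ with $E$ the conserved total energy: the cubic term by $\|u(\t)\|_\infty\int|u|^2$; the near-field pressure by
\[
\int_{B_\r}|u||p_1|\leq \Bigl(\int_{B_\r}|u|^2\Bigr)^{1/2}\Bigl(\int_{B_{2\r}}|u|^4\Bigr)^{1/2}\leq (2E)^{1/2}\cdot\|u(\t)\|_\infty (2E)^{1/2},
\]
using Calder\'on--Zygmund for $p_1$ and extracting only \emph{one} factor of $\|u\|_\infty$ from $(\int|u|^4)^{1/2}\leq\|u\|_\infty(\int|u|^2)^{1/2}$; and the far field by the pointwise kernel bound $|p_2|\lesssim \r^{-N}E$ on $B_\r$. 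The total flux is then $\frac{CE}{\r}\int_t^T\|u(\t)\|_\infty\,d\t$, and setting $\r=A\int_t^T\|u(\t)\|_\infty\,d\t$ gives $\e_0\leq C\int_{B_\r}|u(t)|^2\,dx+CE/A$; hypothesis \eqref{enlimit} kills the first term along a sequence and $A\to\infty$ kills the second. In particular the time-dependent test function, the transport-sign argument, and the BMO oscillation estimate in your sketch are all unnecessary once the flux is bounded this way.
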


One can easily check that $u \in L^{\frac{2}{N}+1}L^\infty$, or a weak-type condition $\|u(t)\|_\infty \leq o(T-t)(T-t)^{\frac{N}{N+2}}$ implies \eqref{enlimit}.

\begin{proof}
Let $E = \frac{1}{2}\int_{\R^3} |u|^2 \,dx$ denote the total (time-independent) energy of the solution. 
The cubic term in the last integral has a straightforward estimate
\[
 \int_{|x-x_0| \leq \r} |u|^3\, dx \leq E \|u(\t)\|_\infty.
\]
We split the pressure as follows $p = p_{loc}+p_1 + p_2$, where $p_{loc}$ is the local term of \eqref{p}, and does not require attention, while
\[
p_1(x) = \int_{|y - x_0| < 2\r}; \qquad p_2(x) = \int_{|y - x_0| \geq 2\r}.
\]
We have, by the Calderon-Zygmund boundedness,
\begin{equation*}\label{}
\begin{split}
\int_{|x-x_0| \leq \r} |u||p_1| \, dx \leq \left(  \int_{|x-x_0| \leq \r} |u|^2\, dx \right)^{1/2} \left(  \int_{|x-x_0| \leq 2\r} |u|^4\, dx \right)^{1/2} \\ \leq E \|u(\t) \|_\infty.
\end{split}
\end{equation*}
as to the $p_2$ term we have
\begin{equation*}\label{}
\begin{split}
\int_{|x-x_0| \leq \r} |u||p_2| \, dx \leq  C  \|u(\t) \|_\infty  \r^N \sup_{|x-x_0| \leq \r}\int _{|y - x_0| \geq 2\r} \frac{1}{|x-y|^N} |u(y)|^2\, dy \\
\leq C E \| u(\t)\|_\infty.
\end{split}
\end{equation*}
Returning to the energy inequality, we obtain
\begin{equation}\label{}
\begin{split}
\e_0 \leq C  \int_{|x-x_0|\leq \rho} |u(t,x)|^2 \, dx+ \frac{CE}{\r} \int_t^T  \|u(\t)\|_\infty \, d\t
\end{split}
\end{equation}
For any fixed $A>0$, let $\r =  A \int_t^T  \|u(\t)\|_\infty \, d\t$. Letting $t \ra T$, the above implies $\e_0 \lesssim A^{-1}$, a contradiction.
\end{proof}

\subsection{Case $r > \frac{2}{N}+1$}
Let us fix an $0< L<1$, the subdomain $\O_L = \{x \in \O: \dist\{x, \d\O\} >L\}$, and without loss of generality assume that $S \ss \O_L$. Let us fix a small $\d>0$ and large $M \in \N$ such that 
\begin{equation}\label{M}
M \d > N - \frac{2}{r-1}.
\end{equation}
The main technical ingredient is the following lemma.

\begin{lemma}\label{l:par}
There exists a constant $C= C(\d,L,u) >0$ and $\r_0 = \r_0(\d,L,u)>0$ such that for every $x_0 \in \O_L$ and $\r < \r_0$ one has
\begin{equation}\label{e:par}
\sup_{T - \r^{\frac{r}{r-1} + \d}\leq t\leq T} \int_{|x-x_0| \leq \r} |u(t,x)|^2\, dx \leq C \r^{ N - \frac{2}{r-1} -\d }.
\end{equation}
\end{lemma}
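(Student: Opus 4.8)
The plan is to apply the local energy equality \eqref{locen} on the scale-adapted window $I_\r=[T-\r^{\frac{r}{r-1}+\d},T)$ against a cutoff concentrated on $B_\r(x_0)$, and to close the resulting bound by a downward induction on the dyadic scales $\r_j=2^{-j}\r_0$. Fix $\s_\r\in C^\infty_0$ with $\s_\r\equiv1$ on $B_\r(x_0)$, $\supp\s_\r\ss B_{2\r}(x_0)$ and $|\n\s_\r|\lesssim\r^{-1}$, and put $M(t)=\int_\O|u(t,x)|^2\s_\r\,dx$. By \eqref{locen}, $M$ is absolutely continuous with $|M'(t)|\le\frac{C}{\r}\int_{B_{2\r}(x_0)}(|u|^3+|u||p|)\,dx$, so for every $t\in I_\r$ and any reference time $t_0\in I_\r$,
\[
M(t)\le M(t_0)+\frac{C}{\r}\int_{t_0}^{T}\int_{B_{2\r}(x_0)}\bigl(|u|^3+|u||p|\bigr)\,dx\,d\t .
\]
The quantity that powers the whole estimate is the shortness of $I_\r$: by H\"older, $\int_{t_0}^{T}\|u(\t)\|_\infty\,d\t\le\eta(\r)\,\r^{1+\d'}$, where $\d'=\d\frac{r-1}{r}>0$ and $\eta(\r)=\|u\|_{L^r(I_\r;L^\infty)}\to0$ as $\r\to0$.

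Next I would estimate the flux. The cubic term and the local part of the pressure in \eqref{p} are each dominated by $\frac1\r\|u(\t)\|_\infty\int_{B_{2\r}}|u|^2\,dx$; inserting the inductive bound at the comparable scale and the window estimate above, their time integral is at most $\eta(\r)\,C\,\r^{N-\frac{2}{r-1}-\d+\d'}$, a small multiple of the target. For the Calder\'on--Zygmund part I split the quadratic source into its restriction to $B_{4\r}(x_0)$ and its far tail, exactly as in the proof of \lem{l:atom}. The near piece is handled by $L^2$-boundedness, $\|p_{\mathrm{near}}\|_{L^2(B_{2\r})}\lesssim\||u|^2\|_{L^2(B_{4\r})}\le\|u\|_\infty\|u\|_{L^2(B_{4\r})}$, and again reduces to a multiple of $\frac1\r\|u\|_\infty\int_{B_{4\r}}|u|^2\,dx$, carrying the same smallness.

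The main obstacle is the far tail $p_{\mathrm{far}}$, which is genuinely nonlocal and a priori couples the energy in $B_\r$ to the energy at every larger scale. To tame it I would use $\n\cdot u=0$ to subtract from $p$ its mean over $B_{2\r}$, which is admissible in the flux $\int(|u|^2+2p)u\cdot\n\s_\r$; the oscillation of the tail is then $\lesssim\r\,\|\n p_{\mathrm{far}}\|_{L^\infty(B_{2\r})}$. Decomposing the source dyadically and feeding in the inductive bounds $\int_{B_{2^k\r}}|u|^2\lesssim(2^k\r)^{N-\frac2{r-1}-\d}$ at the strictly larger scales (and the total energy past $\r_0$), the resulting series converges since $N-\frac2{r-1}-\d<N+1$, and a Cauchy--Schwarz estimate of $\frac1\r\int_{B_{2\r}}|u|\,|p_{\mathrm{far}}-\overline{p_{\mathrm{far}}}|$, integrated over $I_\r$, produces after the exponent bookkeeping a bound of the form $C^{3/2}\r^{N-\frac2{r-1}-\d+\frac\d2}$ --- again below the target by a positive power of $\r$, which is absorbed once $\r_0$ is chosen small relative to $C$.

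Collecting the three contributions yields $\sup_{t\in I_\r}M(t)\le M(t_0)+(\text{small})\,\r^{N-\frac2{r-1}-\d}$, and the last delicate point is the reference term $M(t_0)$. Controlling it by the next dyadic scale would cost a factor $2^{N-\frac2{r-1}-\d}>1$ and ruin the sharp exponent, so instead I would select $t_0$ in the first half of $I_\r$ on which the $L^r$-bound is no larger than its average; then $\|u(t_0)\|_\infty\lesssim\eta(\r)\,\r^{-\frac1{r-1}-\frac\d r}$ and $M(t_0)\le\|u(t_0)\|_\infty^2|B_{2\r}|\lesssim\eta(\r)^2\,\r^{N-\frac2{r-1}-\frac{2\d}r}$, which is a small multiple of the target when $r\ge2$. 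I expect the genuine difficulty to be concentrated precisely in the endgame: simultaneously (i) keeping the nonlocal pressure from leaking the total energy into $B_\r$, and (ii) propagating the sharp power $N-\frac2{r-1}-\d$ down every dyadic scale without loss in the reference term; for the smaller exponents $\frac2N+1<r<2$ the latter requires a more careful choice of $t_0$ (or an extra iteration of the window) so that the accumulated constants stay bounded as $j\to\infty$.
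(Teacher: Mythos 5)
Your overall setup (the window $I_\r=[T-\r^{\frac{r}{r-1}+\d},T)$, the dyadic splitting of the pressure, the choice of a good reference time $t_0$ where $\|u(t_0)\|_\infty^r\lesssim 1/t_0$) matches the paper, but your proof architecture is genuinely different: you run a downward induction on dyadic scales, assuming the conclusion of the lemma at all scales $2^k\r$ with $k\ge 1$ and feeding those bounds into the nonlocal pressure at scale $\r$. The paper never assumes the improved bound at larger scales. Instead it iterates the raw energy inequality a \emph{fixed} number $M$ of times (with $M\d>N-\frac{2}{r-1}$), each iteration producing an extra factor $F(t)/\r\lesssim\r^{\d(r-1)/r}$, and after $M$ steps it terminates the expansion with the trivial global energy bound, so that the accumulated smallness $(F(t)/\r)^M$ beats $\|u\|_2^2$. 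The reference-time energies are all controlled by $l^Nf^2(t_0)$. This finite-iteration scheme is precisely what lets the paper sidestep the point where your argument breaks.

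The gap in your version is the closing of the induction. Your far-tail estimate is superlinear in the induction constant: Cauchy--Schwarz on $\frac1\r\int_{B_{2\r}}|u|\,|p_{\mathrm{far}}-\overline{p_{\mathrm{far}}}|$ puts one factor of $\sqrt{C}$ from $\|u\|_{L^2(B_{2\r})}$ next to a factor of $C$ from the dyadic sum, giving $C^{3/2}\r^{\d/2}\cdot\r^{N-\frac2{r-1}-\d}$, and you propose to absorb this by taking $\r_0$ small relative to $C$, i.e.\ $C^{1/2}\r_0^{\d/2}\ll1$, so $C\lesssim\r_0^{-\d}$. But the base case of your induction, at scale $\r_0$, has no input other than the total energy, so it forces $C\gtrsim E\,\r_0^{-(N-\frac2{r-1}-\d)}$; since $N-\frac2{r-1}-2\d>0$, these two requirements are incompatible once $\r_0$ is small --- and $\r_0$ must be small both to fit $B_{2^K\r}(x_0)$ inside $\O$ and to make $\eta(\r_0)=\|u\|_{L^r(I_{\r_0};L^\infty)}$ small. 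The circularity is avoidable: if instead of Cauchy--Schwarz you bound $\int_{B_{2\r}}|u|\,dx\le\|u(\t)\|_\infty\,\r^N$ and let the retained factor $f(\t)$ integrate to $F\lesssim\eta(\r)\r^{1+\d'}$, the far-tail contribution becomes $\eta(\r)\,C\,\r^{\d'}\cdot\r^{N-\frac2{r-1}-\d}$, which is linear in $C$ with a genuinely small prefactor and closes the induction; a similar repair (or the paper's route) is also needed for your reference term when $\frac2N+1<r<2$, where, as you note, $\r^{-2\d/r}$ exceeds $\r^{-\d}$ --- there one should simply accept the exponent $N-\frac2{r-1}-\frac{2\d}{r}$ and rename $\d$, since the lemma is claimed for every $\d>0$.
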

The theorem follows immediately from \lem{l:par}. Indeed, suppose that $\dim_\H(S) =d<N - \frac{2}{r-1}$. Let $\d>0$ be so small that $d < N - \frac{2}{r-1} - \d$. Then for every $\e>0$ there exists a cover of $S$ by open balls $B_{\r_i}(x_i)$, $\r_i < \r_0$, $x_i \in \O_L$, with 
\[
\sum_{i = 1}^\infty \r_i^{ N - \frac{2}{r-1} - \d} <\e.
\]
Then
\begin{equation*}\label{}
\begin{split}
\E_T(S) &\leq \sum_{i = 1}^\infty \int_\O \s_{2\r_i}(x-x_i)d \E_T(x) =  \sum_{i = 1}^\infty \lim_{t \ra T} \int_\O \s_{2\r_i}(x-x_i) |u(t,x)|^2 \, dx\\
& \leq C \sum_{i = 1}^\infty \r_i^{ N - \frac{2}{r-1} - \d} \leq C \e.
\end{split}
\end{equation*}

\begin{proof}[ Proof of \lem{l:par}]
To simplify our notation, let us assume that $x_0 = 0$, $T = 0$, and the time $t>0$ is reversed. Let us introduce some notation first. Let us fix a positive time $t_0>0$, and $\r<\r_0$ where $\r_0$ is small, but fixed, satisfying
\begin{equation}\label{}
\log_2(L/\r_0 )> 3M+4.
\end{equation}
The time $t_0$ will be determined later and will depend on $\r$. Let $\s \in C^\infty_0(\R^N)$ with $\s = 1$ on $|x|<1/2$ and $\s = 0$ on $|x|>1$. Denote
\[
\begin{split}
E(t,\r) & = \int |u(x,t)|^2 \s(x/\r)\, dx \\
f(t) & = \|u(t)\|_{L^\infty(\O)} \\
F(t)& = \int_t^{t_0} f(\t) d\t.
\end{split}
\]
Note that $\int_{|x| \leq \r} |u(x,t)|^2\, dx \leq E(t,2\r)$. Our goal will be to establish uniform bounds on the energy $E(t,\r)$ for all $t<t_0$ and $\r <\r_0$. In all computations below $\lesssim$ will denote an inequality that holds up to a constant independent of $\r$ or $t$.

Let $\r <\r_0$ be fixed, and let $K  = [\log_2(L/\r)] - 1$. From the above, $K-3M \geq 3$. Notice that the ball $\{|x| < 2^K \r\}$ is still inside the domain $\O$. From the energy equality \eqref{locen} we find
\begin{equation}\label{enini}
E(t,\r) \leq E(t_0,\r) + \frac{C}{\r} \int_t^{t_0} \int_{|x| \leq \r}(|u|^3 + |u||p|)\, dx\, d\t.
\end{equation}
Using introduced notation, we have
\[
\int_t^{t_0} \int_{|x| \leq \r}|u|^3 \, dx\, d\t \leq \int_t^{t_0} f(\t) E(\t, 2\r) \, d\t.
\]
As to the pressure term we split similar to the previous 
$p = p_{loc}+p_1 + p_2$, where
\[
p_1(x) = \int_{|y| < 2\r}; \qquad p_2(x) = \int_{|y| \geq 2\r}.
\]
The term with $p_1$ is estimated through the Calderon-Zygmund inequality as before, and gives
\[
 \int_t^{t_0} \int_{|x| \leq \r} |u||p_1| \, dx\, d\t \lesssim \int_t^{t_0} f(\t) E(\t, 4\r) \, d\t.
\]
For $p_2$ we obtain for all $|x| \leq \r$,
\[
p_2(\t,x) \leq \frac{1}{\r^N} \sum_{k=3}^{K - 3M} \frac{1}{2^{Nk}} E(\t, 2^k \r) + \frac{2^{3NM}}{L^N} \|u\|_2.
\]
Thus,
\[
\int_t^{t_0} \int_{|x| \leq \r} |u||p_2| \, dx\, d\t \lesssim \int_t^{t_0}f(\t) \sum_{k=3}^{K - 3M}  \frac{1}{2^{Nk}}E(\t, 2^k \r)\, d\t + \r^N F(t).
\]
Putting all the estimates together we obtain
\begin{equation}\label{ini}
\begin{split}
E(t,\r) \lesssim E(t_0,\r) + \frac{1}{\r}   \sum_{k=1}^{K - 3M}  \frac{1}{2^{N k}}  \int_t^{t_0}f(\t) E(\t, 2^k \r) \, d\t + \r^{N-1} F(t).
\end{split}
\end{equation}
Notice that the index $k$ must be allowed to reach $3$ for the above step to be possible. With our choice of $K$, we therefore can make $M$ iterations of \eqref{ini}. Let $k_1 = k$ and $t_1 = \t$. From \eqref{ini} we obtain
\[
\begin{split}
E(t_1, 2^{k_1} \r) \lesssim E(t_0,2^{k_1} \r)&  + \frac{1}{2^{k_1} \r}   \sum_{k_1+k_2 \leq K - 3(M-1)}  \frac{1}{2^{N k_2}}  \int_{t_1}^{t_0}f(t_2) E(t_2, 2^{k_1+k_2} \r) \, dt_2\\
& + 2^{(N-1)k_1} \r^2 F(t_1).
\end{split}
\]
Plugging back into \eqref{ini} we obtain
\[
\begin{split}
E(t,\r) & \lesssim E(t_0,\r) + \frac{1}{\r} F(t) \sum_{k_1=1}^{K - 3M}  \frac{1}{2^{N k_1}} E(t_0, 2^{k_1} \r) \\
&+ \r^{N-2} \sum_{k_1=1}^{K - 3M}  \frac{1}{2^{k_1}}  \int_{t}^{t_0}f(t_1) \int_{t_1}^{t_0}f(t_2)\, dt_2 dt_1 \\
&+ \frac{1}{\r^2} \sum_{k_1+k_2 \leq K - 3(M-1)}  \frac{1}{2^{N(k_1+k_2)}2^{k_1}}  \int_{t}^{t_0}f(t_1) \int_{t_1}^{t_0}f(t_2) E(t_2, 2^{k_1+k_2} \r) \, dt_2 dt_1.
\end{split}
\]
Estimating all the energies at time $t_0$ trivially $E(t_0, l) \leq l^N f^2(t_0)$ we obtain
\[
\begin{split}
E(t,\r) & \lesssim \r^N f^2(t_0) + K \r^{N-1} f^2(t_0) F(t) + \frac{1}{2!} F^2(t) \r \\
&+ \frac{1}{\r^2} \sum_{k_1+k_2 \leq K - 3(M-1)}  \frac{1}{2^{N(k_1+k_2)}2^{k_1}}  \int_{t}^{t_0}f(t_1) \int_{t_1}^{t_0}f(t_2) E(t_2, 2^{k_1+k_2} \r) \, dt_2 dt_1.
\end{split}
\]
The next iteration produces the following inequality
\[
\begin{split}
E(t,\r) & \lesssim \r^N f^2(t_0)K \left(1 + F(t)/\r + \frac{1}{2!} (F(t)/\r)^2 \right)  \\
&+ \r^3\left(\frac{1}{2!} (F(t)/\r)^2 + \frac{1}{3!} (F(t)/\r)^3 \right) \\
&+ \frac{1}{\r^3} \sum_{k_1+k_2 +k_3 \leq K - 3(M-2)}  \frac{1}{2^{N(k_1+k_2)}2^{k_1+k_2}}  \\
&\int_{t}^{t_0}f(t_1) \int_{t_1}^{t_0}f(t_2) \int_{t_2}^{t_3} f(t_3) E(t_3, 2^{k_1+k_2+k_3} \r) \, dt_3 dt_2 dt_1.
\end{split}
\]
On the $M$-th step we obtain
\[
\begin{split}
E(t,\r) & \lesssim K \r^N f^2(t_0)\left( 1 + \ldots  + \frac{1}{(M-1)!} (F(t)/\r)^{(M -1)} \right) \\
& + \r^N \left( \frac{1}{2!} (F(t) /\r)^2 + \ldots + \frac{1}{M!} (F(t) / \r)^M \right)\\
&+ \frac{1}{\r^M} \mathrm{sum}_M
\end{split}
\]
where the "sum$_M$" involves an $M$-tuple integral similar to the above. Replacing the energies in that integral trivially by $\|u\|_2$, we obtain
\[
\frac{1}{\r^M} \mathrm{sum}_M \lesssim (F(t)/\r)^M.
\]
We thus arrive at the following estimate for all $0<t<t_0$
\[
E(t,\r)  \lesssim K \r^N f^2(t_0)\exp{(F(t)/\r)} + \r^N \exp{(F(t)/\r)}+ (F(t) / \r)^M. 
\]
By H\"{o}lder, $F(t) \leq t_0^{\frac{r-1}{r}} \|u\|_{L^rL^\infty(\O)}$. Suppose we can choose $t_0 \sim \r^{\frac{r}{r-1} + \d} $ such that $f^r(t_0) \lesssim \frac{1}{t_0}$. Then, in view of \eqref{M}, and $K \lesssim | \log_2\r | \lesssim \r^\d$,  the bound above would give the desired inequality \eqref{e:par}. To find such $t_0$ we recall that $f \in L^r(0,T)$. Then starting from some $t' >0$ for all $t<t'$ there exists a $t_0 \in [t/2, t] $ such that $f^r(t_0) \leq 1/t_0$. Indeed, otherwise the integral of $f$ would diverge logarithmically. In addition, $t'$ depends only on $\|f\|_r$. Therefore by further reducing the size of $\r_0$ to satisfy $ \r_0^{\frac{r}{r-1} + \d} \leq t'$ we obtain the desired conclusion.
\end{proof}

Applying \thm{t:r} to the case of self-similar solutions \eqref{e:ss} with $v\in L^\infty$ we immediately obtain the following conclusion.

\begin{corollary} Suppose $v \in L^\infty$, $0< \a <\frac{N}{2}$. Then in the region of self-similarity the energy does not concentrate on sets of dimension smaller than $N-2\a$.
\end{corollary}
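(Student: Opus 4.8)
The plan is to read the corollary off directly from \thm{t:r}: since the self-similar profile is bounded, the velocity $u$ lies in $L^r(L^\infty(\O))$ on the region of self-similarity $\O = \{|x-x_0|<\r_0\}$ for a computable range of exponents $r$, and \thm{t:r} then pins down $d_T$. The only genuine work is to identify this range and to handle the fact that the naturally occurring exponent sits exactly at the boundary of integrability.

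First I would compute the time decay of the $L^\infty$-norm. Substituting \eqref{e:ss} and using $v \in L^\infty$, for every $t<T$ one has
\[
\|u(t)\|_{L^\infty(\O)} = \frac{1}{(T-t)^{\frac{\a}{1+\a}}}\esssup_{|y| < \r_0(T-t)^{-\frac{1}{1+\a}}}|v(y)| \leq \frac{\|v\|_\infty}{(T-t)^{\frac{\a}{1+\a}}},
\]
so that $\int_0^T \|u(t)\|_{L^\infty(\O)}^r\,dt \lesssim \int_0^T (T-t)^{-\frac{r\a}{1+\a}}\,dt$ is finite precisely when $r < 1+\frac1\a$. Since $0<\a<\frac N2$ forces $\frac1\a > \frac2N$, the window $1+\frac2N \leq r < 1+\frac1\a$ is nonempty, and for each such $r$ the hypothesis $u \in L^r(L^\infty(\O))$ of \thm{t:r} holds. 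The remaining standing requirements — finiteness of the total energy and the local $L^3L^3$ bound that makes $\E_T$ well defined on $\O$ — I would record at the outset: they follow from the energy growth bound $\int_{|y|\leq L}|v|^2\,dy \lesssim L^{N-2\a}$ (equivalent to finite energy) together with $v\in L^\infty$, which yields $\int_{|y|<R}|v|^3 \leq \|v\|_\infty \int_{|y|<R}|v|^2 \lesssim R^{N-2\a}$ and hence, after rescaling, $\int_0^T\!\int_\O|u|^3\,dx\,dt \lesssim \int_0^T (T-t)^{-\frac{\a}{1+\a}}\,dt < \infty$.

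Next I would apply \thm{t:r} for each admissible $r$, obtaining $d_T \geq N - \frac{2}{r-1}$. Because $d_T$ is a single number attached to the solution while this bound is valid for every $r<1+\frac1\a$, I would let $r \uparrow 1+\frac1\a$, so that $r-1 \uparrow \frac1\a$ and $N-\frac{2}{r-1}\uparrow N-2\a$, yielding $d_T \geq N-2\a$ as claimed.

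I expect the main (though minor) obstacle to be exactly this endpoint: the exponent $r = 1+\frac1\a$ that would reproduce $N-2\a$ in a single application is \emph{not} admissible, since the time integral $\int_0^T(T-t)^{-\frac{r\a}{1+\a}}\,dt$ diverges logarithmically there. The resolution is the limiting argument above, which costs nothing because the lower bound furnished by \thm{t:r} is continuous and monotone in $r$. A secondary point I would verify carefully is that $\esssup_{|y|<R}|v(y)|$ stays bounded as the self-similar ball $R = \r_0(T-t)^{-\frac{1}{1+\a}}$ expands to fill $\R^N$; this is immediate from $v\in L^\infty(\R^N)$, but it is precisely where the global — rather than merely local — boundedness of the profile enters the estimate.
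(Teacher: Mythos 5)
Your argument is correct and is exactly the derivation the paper intends (the paper states the corollary as an immediate consequence of Theorem~\ref{t:r} without writing out the details): the scaling \eqref{e:ss} with $v\in L^\infty$ gives $\|u(t)\|_\infty\lesssim (T-t)^{-\a/(1+\a)}$, hence $u\in L^rL^\infty$ for all $\frac2N+1\le r<1+\frac1\a$ (a nonempty window precisely because $\a<\frac N2$), and letting $r\uparrow 1+\frac1\a$ in the bound $d_T\ge N-\frac{2}{r-1}$ yields $d_T\ge N-2\a$. Your handling of the non-admissible endpoint $r=1+\frac1\a$ by monotone approximation is the right (and necessary) touch.
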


\section{Local merger and self-similar solutions}\label{s:merger}

\begin{theorem}\label{t:merger}
Suppose $u_1$ and $u_2$ are two classical solutions to \eqref{ee} on a time interval $[0,T)$, and $\n u_1, u_2 \in L^r([0,T);L^\infty(\O))$ or $\n u_2, u_1 \in L^r([0,T);L^\infty(\O))$ for some $r >1$.  Suppose further that 
\begin{equation}\label{e:merger}
\| u_1 (t)- u_2(t) \|_{L^2(\O)} \ra 0 \text{, as } t \ra T.
\end{equation}
Then for every compactly embedded subdomain $\O' \ss \O$ and every $\d >0$ there exists $C>0$ such that
\begin{equation}\label{e:improvemerger}
\| u_1 (t)- u_2(t) \|_{L^2(\O')} \leq C (T - t)^{1-\d}, \text{ as } t \ra T.
\end{equation}
\end{theorem}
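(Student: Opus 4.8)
The plan is to study the difference $w = u_1 - u_2$ together with its pressure $q = p_1 - p_2$, and to upgrade the qualitative convergence \eqref{e:merger} to a quantitative rate by a bootstrap carried out across a finite tower of nested subdomains. Subtracting the two copies of \eqref{ee} and using $\n\cdot w = 0$, the field $w$ solves
\[
\p_t w + u_1\cdot\n w + w\cdot\n u_2 + \n q = 0 ,
\]
where I pick the decomposition of the nonlinearity matching the hypothesis, say $u_1,\n u_2\in L^rL^\infty(\O)$ (the other case being symmetric). The structural point I would exploit is that everything is \emph{linear} in $w$: writing $-\Delta q = \p_i\p_j\big(u_1^iw^j + w^iu_2^j\big)$ and integrating by parts in the second term, $\p_i\p_j(w^iu_2^j) = \p_j\big((w\cdot\n)u_2^j\big)$, so that $u_2$ enters only through $\n u_2$ (controlled) and never through $\|u_2\|_\infty$ (which is not). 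Thus $q = q^{(1)} + q^{(2)}$, with $q^{(1)}$ a Calder\'on--Zygmund transform of $u_1\otimes w$ and $q^{(2)}$ a first-order Riesz potential of $(w\cdot\n)u_2$, both linear in $w$ and estimable through $\|u_1\|_\infty$, $\|\n u_2\|_\infty$ and local $L^2$-norms of $w$ alone.

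Next I fix a tower $\O' = \O_0\Subset\O_1\Subset\cdots\Subset\O_n\Subset\O$ with uniform gaps, set $f(t) = \|u_1(t)\|_\infty + \|\n u_2(t)\|_\infty\in L^r$, $F(t) = \int_t^T f$, and $E_j(t) = \|w(t)\|_{L^2(\O_j)}^2$. Testing the equation against $w\s_j$, with $\s_j = 1$ on $\O_j$ and $\supp\s_j\ss\O_{j+1}$, and integrating from $t$ to $T$ (where $E_j\to0$ by \eqref{e:merger}), the transport, stretching, and near-field pressure terms are controlled, exactly as in \lem{l:par}, by $\int_t^T f(\t)\big(E_{j+1}(\t)+E_{j+2}(\t)\big)\,d\t$. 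The essential new term is the far-field pressure tail: splitting the kernel of $q$ at $\O_{j+2}$, the outer piece $q^{\mathrm{out}}$ is bounded pointwise on $\O_{j+1}$ by a fixed constant (the finite global energies of $u_1,u_2$ divided by a power of the gap), yet it is paired only with $w\cdot\n\s_j$, which lives on the shell $\O_{j+1}\setminus\O_j\ss\O$ where $w$ is small. Hence this term contributes merely $C\int_t^T\sqrt{E_{j+1}(\t)}\,d\t$ with $C$ independent of $t$, and altogether
\[
E_j(t)\le C\int_t^T f(\t)\big(E_{j+1}(\t)+E_{j+2}(\t)\big)\,d\t + C\int_t^T\sqrt{E_{j+1}(\t)}\,d\t .
\]

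Finally I run the bootstrap. If $E_{j+1}(\t),E_{j+2}(\t)\le A(T-\t)^{2b}$ on $[t,T)$, then by H\"older the first term is $\lesssim (T-t)^{2b+\frac{r-1}{r}}$ and the second is $\lesssim (T-t)^{b+1}$, so $E_j(t)\lesssim (T-t)^{2b'}$ with $b' = \min\big(b+\tfrac{r-1}{2r},\,\tfrac{b+1}{2}\big)$. Starting from the crude exponent $b=0$ at the outermost domain $\O_n$ (where $E_n$ is merely bounded) and descending the tower, the map $b\mapsto b'$ raises $b$ by the fixed amount $\tfrac{r-1}{2r}>0$ while $b<\tfrac1r$, and thereafter satisfies $1-b'\le\tfrac12(1-b)$, so $b$ converges geometrically to $1$; after $n=n(\d)$ layers one reaches $b\ge 1-\d$ on $\O_0=\O'$, which is precisely \eqref{e:improvemerger}, and since $n(\d)$ is finite the tower fits inside $\O$. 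The main obstacle is the second step: arranging the nonlocal pressure so that the integration by parts eliminates the uncontrolled $\|u_2\|_\infty$, and so that the far-field tail yields the self-improving $\int\sqrt{E_{j+1}}$ term rather than a non-decaying constant — it is this term that drives the exponent to the sharp value $1$, with the unavoidable loss of $\d$ reflecting that the fixed point $b=1$ is only approached, never attained, in finitely many layers.
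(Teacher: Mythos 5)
Your proposal is correct in substance and rests on the same engine as the paper's proof: the local energy inequality for $w=u_1-u_2$ with a near/far splitting of the pressure, leading to the key estimate $E(t)\lesssim\int_t^T f(\t)E(\t)\,d\t+\int_t^T\sqrt{E(\t)}\,d\t$ with $f\in L^r$, followed by a bootstrap whose exponent-improvement map is literally the paper's: your $b\mapsto\min\bigl(b+\tfrac{r-1}{2r},\tfrac{b+1}{2}\bigr)$ is the paper's $\a\mapsto\min\bigl(\a+1-\tfrac1r,\tfrac{\a+2}{2}\bigr)$ with $\a=2b$. You organize the two steps differently, and each variation is worth noting. First, you iterate explicitly down a finite tower $\O_0\Subset\cdots\Subset\O_n$, whereas the paper defines $\a_0=\sup\{\a<2:\exists\r,\,E(t,\r)\le C(T-t)^\a\}$ on shrinking balls $B_{\r/4}\ss B_\r$ and derives a contradiction from a single application of \eqref{vanen3}; the sup trick is slicker (no bookkeeping of $n(\d)$ layers or growing constants), your version is more constructive, and both are valid since the ball/domain only shrinks finitely often. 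Second, for the pressure you integrate by parts, $\p_i\p_j(w^iu_2^j)=\p_j((w\cdot\n)u_2^j)$, to trade $\|u_2\|_\infty$ for $\|\n u_2\|_\infty$; the paper instead keeps $\|u_1\|_\infty$ (the velocity whose gradient is assumed bounded, in its labeling) and recovers its $L^r$-in-time integrability from the interpolation $\|u\|_\infty\lesssim\|u\|_2^{2/(N+2)}(\|u\|_\infty+\|\n u\|_\infty)^{N/(N+2)}$ plus bounded energy. Your route dispenses with that interpolation lemma, which is a genuine simplification, but it needs one more sentence of care: the identity $q^{(2)}=(-\Delta)^{-1}\p_j((w\cdot\n)u_2^j)$ used globally is not directly estimable, because $\n u_2$ is controlled only on $\O$ (in $L^\infty$) and nowhere near $L^1$ or $L^2$ of $\R^N$ uniformly up to time $T$; you must first localize with a cutoff, integrate by parts only in the near field, and absorb the resulting commutator term $\p_j(\p_i\chi\,w^iu_2^j)$ into the far-field piece, which — as you correctly do for $q^{\mathrm{out}}$ — is bounded by global energies since its kernel is nonsingular on the shell where $w\cdot\n\s_j$ lives. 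With that localization spelled out, the argument closes and gives \eqref{e:improvemerger}.
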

\begin{proof}
Let us assume for definiteness that $\n u_1, u_2 \in L^r([0,T);L^\infty(\O))$. Let $w = u_1-u_2$. Then $w$ satisfies
\begin{equation}\label{ }
w_t = -w \cdot \n u_1 - u_2 \cdot \n w - \n q,
\end{equation}
where $q$ is the associated pressure recovered via a relationship similar to \eqref{p}.
By the standard covering argument it suffices to show that for every $x_0 \in \O$ there exists a $\r_0>0$ such that 
\begin{equation}\label{goal10}
 \int_{|x-x_0| \leq \r_0} |w(t,x)|^2\, dx \leq C(T-t)^{2-\d}.
\end{equation}
Let us assume for notational convenience that $x_0 = 0 \in \O$. Let us denote
\[
E(t,\r)  = \int_{|x| \leq \r} |w(t,x)|^2\, dx.
\]
The result will follow from the following energy estimate
\begin{equation}\label{vanen3}
E(t,\r)  \lesssim \int_t^T f(\t) E(\t, 4\r) \, d\t + \int_t^T\sqrt{ E(\t, 4\r)}\, d\t,
\end{equation}
where $f = \|u_1\|_\infty + \|\n u_1\|_\infty + \|u_2\|_\infty $. To see that, first, let us note the following interpolation inequality
\[
\|u_1\|_\infty \leq C_\O \|u_1\|_2^{\frac{2}{N+2}}( \|u_1\|_\infty + \|\n u_1\|_\infty)^{\frac{N}{N+2}}.
\]
Since the energy of $u_1$ is bounded, we obtain $\| u_1\|_\infty^{\frac{N+2}{N}} \leq \max\{ C_{\O,E} ; \|\n u_1\|_\infty \}$, and hence $f \in L^r$. Let us now consider
\[
\a_0 = \sup\{\a <2: \exists \r > 0, \exists C>0: E(t,\r) \leq C(T-t)^\a \text{ as } t\ra T\}.
\]
From \eqref{vanen3} it follows that $\a_0 \geq 1- 1/r$, and \eqref{goal10} is equivalent to $\a_0 \geq 2$. Assume that $\a_0 < 2$. Then let $\d >0$ be small, and $\a > \a_0 - \d$. Let $\r>0$ be as in the definition. Then, from \eqref{vanen3},
\begin{equation}\label{}
\begin{split}
E(t,\r/4)  & \lesssim  \int_t^T f(\t) E(\t, \r) \, d\t + \int_t^T\sqrt{ E(\t, \r)}\, d\t \\
& \lesssim (T-t)^{1- \frac{1}{r} + \a_0 - \d} + (T-t)^{\frac{\a_0 - \d + 2}{2}}.
\end{split}
\end{equation}
The power now is strictly grater than $\a_0$ provided $\d$ is small enough, which is a contradiction.

We now turn to proving \eqref{vanen3}. From the energy equality, for all $t<T-\e<T$,
\begin{equation}\label{en-back}
\begin{split}
E(t,\r) & \leq E(T-\e,\r) + \frac{1}{\r} \int_t^{T-\e} \int_{|x| \leq 2\r} ( |u_2 | |w|^2 + |w||q|) \, dx d\t \\
&+  \int_t^{T-\e} \int_{|x| \leq 2\r}  |\n u_1 | |w|^2  \, dx d\t.
\end{split}
\end{equation}
Letting $\e \ra 0$ we obtain
\begin{equation}\label{envan}
\begin{split}
E(t,\r) & \leq C_\r \int_t^T \int_{|x| \leq 2\r} ( |u_2 | |w|^2 + |w||q| + |\n u_1 | |w|^2 ) \, dx d\t.
\end{split}
\end{equation}
We have
\[
\int_{|x| \leq 2\r}  (|u_2 | + |\n u_1|) |w|^2\, dx \leq (\|u_2(\t) \|_\infty + \|\n u_1(\t) \|_\infty) E(\t, 2\r).
\]
The local part of the pressure adds another term $ \| u_1(\t) \|_\infty E(\t, 2\r)$. The non-local part is split as before,
\[
\begin{split}
q_1(x) &= \int_{|y|\leq 4\r} K_{ij}(x-y) (w_i u^{(1)}_j + w_i u^{(2)}_j)(y) dy \\
q_2(x) &= \int_{|y| > 4\r} K_{ij}(x-y) (w_i u^{(1)}_j + w_i u^{(2)}_j)(y) dy.
\end{split}
\]
We have 
\[
\begin{split}
\int_{|x| \leq 2\r} |w||q_1| \, dx & \leq \left( \int_{|x| \leq 2\r} |w|^2 dx \right)^{1/2} \left( \int_{|x| \leq 4\r} |w|^2 (|u_1| + |u_2|)^2 dx \right)^{1/2} \\
& \leq E(\t,4\r) (\|u_1\|_\infty + \|u_2\|_\infty),
\end{split}
\]
while simply using that  $|q_2| \leq C_\r \|u_1\|_2 \|u_2\|_2$ we have
\[
\int_{|x| \leq 2\r} |w||q_2| \, dx \leq C_\r  \sqrt{E(\t, 2\r)}.
\]
Incorporating the above estimates into \eqref{envan} we obtain
\begin{equation*}\label{vanen2}
E(t,\r)  \lesssim \int_t^T ( \|u_1\|_\infty + \|\n u_1\|_\infty + \|u_2\|_\infty ) E(\t, 4\r) \, d\t + \int_t^T\sqrt{ E(\t, 4\r)}\, d\t,
\end{equation*}
which is the desired inequality.
\end{proof}

Assuming $u_2 = 0$ in the above theorem we can find several new exclusion criteria for self-similar blow-up. First let us assume that $v\in C^1_{loc}$ and $|v(y)| \lesssim |y|^{1-\d}$, as $y \ra \infty$, for some $0< \d<1$, and $\a >0$. This puts the solution $u$ in $L^rL^\infty$ for some $r>1$ in the region of self-similarity. In the case $\a = N/2$, the natural energy assumption $v \in L^2$ implies that $\|u(t)\|_2 \ra 0$ in any annulus $0<\r_1< |x - x_0| < \r_0$. Thus, the solution merges with the trivial zero solution, and hence, \thm{t:merger} implies
\[
\int_{L < |y| < 2L} |v|^2\, dy \lesssim \frac{1}{L^{N+2 - \d'}},
\]
for all $\d'>0$. Now, by H\"older for any $p<2$ we have
\[
\int_{L < |y| < 2 L} |v(y)|^p dy \lesssim L^{N - \frac{Np}{2}} \left(\int_{L < |y| < 2 L} |v(y)|^2 \, dy \right)^{p/2} < L^{N-(N+1)p + \d'}.
\]
This implies that $v \in L^{p}$, for all $\frac{N}{N+1} < p < 2$. Furthermore, if $p>2$, then trivially,
\[
\int_{L < |y| < 2 L} |v(y)|^p dy \lesssim   \frac{L^{(p-2)(1-\d)}}{L^{N+2 - \d'}},
\]
which implies $v\in L^p$, if $p \leq 4+N$. In summary, $v\in L^2$ implies 
\[
v \in \bigcap_{\frac{N}{N+1} < p \leq N+4} L^p(\R^N).
\]
Let us note that under the assumption $|v(y)| \gtrsim \frac{1}{|y|^{N+1-\d}}$ these solutions have been already excluded in \cite{cs}. So, the implications above are somewhat more general. 

In the case $\a < N/2$, the natural energy bound on $v$, coming from the boundedness of the global energy, is
\[
\int_{|y| \leq L} |v|^2\, dy \lesssim L^{N-2\a},
\]
while the energy drain condition becomes
\[
\int_{|y| \leq L} |v|^2\, dy \lesssim L^{N-2\a} o(1), \text{ as } L\ra \infty.
\]
Notice that since $N-2\a >0$ this is equivalent to a similar condition over dyadic shells. Again, according to \thm{t:merger} the improved energy bound becomes
\[
\int_{|y| \leq L} |v|^2\, dy \lesssim L^{N-2 -4\a + \d'},
\]
for any $\d'>0$. This excludes solutions in the range $\a >\frac{N-2}{4}$, while in the range $0< \a \leq  \frac{N-2}{4}$ is inconsistent with the bound from below $|v(y)| \gtrsim \frac{1}{|y|^\b}$ for any $\b< 2\a+1$. We summarize our observations in the following corollary.

\begin{corollary}\label{c:ss}
Suppose $v,q$ is a self-similar solution to \eqref{ee} in the form \eqref{e:ss}. Suppose $|v | \lesssim |y|^{1-\d}$. Then $v = 0$ or nonexistent in any of the following cases
\begin{itemize}
\item $\a = \frac{N}{2}$, $v \in L^2 \backslash \bigcap_{\frac{N}{N+1} < p \leq N+4} L^p$;
\item $\frac{N-2}{4} < \a < \frac{N}{2}$, $\int_{|y| <L} |v|^2\, dy \leq L^{N-2\a} o(1)$;
\item $0< \a \leq \frac{N-2}{4}$, $\int_{|y| <L} |v|^2\, dy \leq L^{N-2\a} o(1)$, and $\frac{1}{|y|^\b}\lesssim |v(y)|$, for some $\b < 2\a +1$.
\end{itemize}
\end{corollary}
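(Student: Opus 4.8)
The plan is to apply \thm{t:merger} with the trivial choice $u_2=0$, so that the self-similar solution $u=u_1$ is seen to merge with the zero solution. After translating $x_0$ to the origin I would use the normalized form $u(x,t)=\l^{-\a}v(x/\l)$ with $\l=(T-t)^{1/(1+\a)}$. The first task is to check the integrability branch of the theorem. Since $v\in C^1_{loc}$ and $|v(y)|\lesssim|y|^{1-\d}$, on the self-similarity region $|x|<\r_0$ one has $\|u(t)\|_\infty\lesssim\l^{-\a}(\r_0/\l)^{1-\d}\lesssim(T-t)^{-1+\d/(1+\a)}$, whose exponent is strictly greater than $-1$; hence $u\in L^rL^\infty$ for some $r>1$. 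Because $u_2=0$ gives $\n u_2\equiv0$, the hypothesis ``$\n u_2,u_1\in L^rL^\infty$'' is met for free.

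Next I would verify the merger condition \eqref{e:merger}. Scaling gives $\int_\O|u(t,x)|^2\,dx=\l^{N-2\a}\int_{\O/\l}|v(y)|^2\,dy$, where the rescaled domain $\O/\l$ recedes to infinity as $\l\ra0$. For $\a=\frac{N}{2}$ the prefactor equals $1$, so choosing $\O$ to be an annulus $\r_1<|x|<\r_0$ turns the right-hand side into the $L^2$-mass of $v$ over a shell escaping to infinity, which vanishes since $v\in L^2$. For $\a<\frac{N}{2}$, taking $\O$ to be the ball $|x|<\r_0$ and invoking the drain hypothesis $\int_{|y|<L}|v|^2\,dy\leq L^{N-2\a}o(1)$ makes the right-hand side equal to $\r_0^{N-2\a}o(1)\ra0$. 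In either regime \thm{t:merger} then delivers, on a slightly smaller subdomain, the improved rate $\|u(t)\|_2\leq C_\d(T-t)^{1-\d}$ for every $\d>0$. Undoing the scaling converts this into $\int_{|y|<L}|v|^2\,dy\lesssim L^{N-2-4\a+\d}$ when $\a<\frac{N}{2}$, and $\int_{L<|y|<2L}|v|^2\,dy\lesssim L^{-N-2+\d}$ when $\a=\frac{N}{2}$, for all $\d>0$.

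It then remains to close each of the three cases. When $\a=\frac{N}{2}$, a H\"older estimate on each shell (against its volume $\sim L^N$) upgrades the shell bound to $v\in L^p$ for $\frac{N}{N+1}<p<2$, while the pointwise growth $|v|\lesssim|y|^{1-\d}$ covers $2\leq p\leq N+4$; summing over dyadic shells gives $v\in\bigcap_{\frac{N}{N+1}<p\leq N+4}L^p$, contradicting the standing hypothesis $v\notin\bigcap L^p$. When $\frac{N-2}{4}<\a<\frac{N}{2}$, the exponent $N-2-4\a+\d$ is negative for small $\d$, so the nondecreasing quantity $\int_{|y|<L}|v|^2\,dy$ is dominated by a vanishing power of $L$ and must vanish identically, forcing $v=0$. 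When $0<\a\leq\frac{N-2}{4}$, the assumed lower bound $|v(y)|\gtrsim|y|^{-\b}$ with $\b<2\a+1\leq\frac{N}{2}$ yields $\int_{|y|<L}|v|^2\,dy\gtrsim L^{N-2\b}$; comparing this with the upper bound $L^{N-2-4\a+\d}$ and choosing $\d$ small enough that $N-2\b>N-2-4\a+\d$ produces a contradiction for large $L$.

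The main obstacle is that \thm{t:merger} only produces the improved rate up to an arbitrarily small loss $\d$, so in each case one must confirm that the slightly degraded rate is still decisive. This is automatic in the two generic ranges, but in the borderline band $0<\a\leq\frac{N-2}{4}$ it rests on the precise threshold $\b<2\a+1$: this is exactly the window in which $\d$ can be absorbed while keeping the upper exponent $N-2-4\a+\d$ strictly below the lower exponent $N-2\b$. Verifying the merger hypothesis itself---that the energy genuinely drains on the correct subregion in each regime---is the other point requiring care, since the choice of annulus versus ball is dictated by whether the scaling prefactor $\l^{N-2\a}$ is critical or subcritical.
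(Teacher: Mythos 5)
Your proposal is correct and follows essentially the same route as the paper: apply \thm{t:merger} with $u_2=0$ (verifying $u\in L^rL^\infty$ from $|v|\lesssim|y|^{1-\d}$ and the drain hypothesis via the scaling identity), convert the improved rate $(T-t)^{1-\d}$ into the bounds $\int_{L<|y|<2L}|v|^2\lesssim L^{-N-2+\d'}$ resp.\ $\int_{|y|<L}|v|^2\lesssim L^{N-2-4\a+\d'}$, and close the three cases exactly as the paper does (H\"older plus the pointwise growth bound for $\a=N/2$, negativity of the exponent for $\frac{N-2}{4}<\a<\frac{N}{2}$, and comparison with the lower bound $|v|\gtrsim|y|^{-\b}$, $\b<2\a+1$, in the remaining range). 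The bookkeeping of the $\d$-losses is handled correctly.
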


\section{Adaptations to the Navier-Stokes system} \label{s:NSE}

Most of the results of the previous sections carry over to the viscous case too. Here we assume $N = 3$. Even though some of them may not be optimal for this case, we will show that the contribution of the linear term in most cases is of lower order, and thus is subordinate to the contribution of the nonlinear term. We start with the concentration results. Let us assume that $(u,p)$ is a solution to the Navier-Stokes equation 
\begin{equation}\label{nse}
\begin{split}
u_t + u \cdot \n u + \n p & =\nu \Delta u\\
\n \cdot u & = 0\\
\end{split}
\end{equation}
with smooth initial data $u_0$. Suppose that $[0,T)$ is an interval of regularity of $u$. It was shown in \cite{cfl} that the condition $u \in L^1L^\infty $ holds automatically on $[0,T)$.
\begin{corollary}
Let $u$ be a regular solution to the Navier-Stokes equation on an interval $[0,T)$. Suppose \eqref{enlimit} holds. Then $\E_T$ has no atoms. In particular, $\E_T$ has no atoms if $u\in L^{5/3}([0,T);L^\infty(\O))$.
\end{corollary}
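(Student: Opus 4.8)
The plan is to rerun the proof of \lem{l:atom} essentially verbatim, the only new ingredient being the viscous contribution to the local energy balance. Testing \eqref{nse} against $2u\s$ and integrating by parts (using $\n\cdot u = 0$ and $2\nu\, u\cdot\D u = \nu \D|u|^2 - 2\nu|\n u|^2$) produces the Navier--Stokes analogue of \eqref{locen},
\begin{equation*}
\int_\O |u(t)|^2\s\,dx = \int_\O|u_0|^2\s\,dx + \int_0^t\int_\O(|u|^2+2p)\,u\cdot\n\s\,dx\,d\t + \nu\int_0^t\int_\O|u|^2\D\s\,dx\,d\t - 2\nu\int_0^t\int_\O|\n u|^2\s\,dx\,d\t,
\end{equation*}
valid for all $\s\in C_0^\infty(\O)$ and $0<t<T$. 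As before the right-hand side has a limit as $t\ra T$: by \cite{cfl} one has $u\in L^1L^\infty$ automatically, hence $\int_0^T\!\int_B|u|^3\le 2E\|u\|_{L^1L^\infty}<\infty$ so the flux term converges, the Laplacian term is controlled by $E\cdot T$, and the dissipation term is monotone and bounded by $E$. Thus $\E_T$ is well defined. Writing the balance backward from $t$ to $T$ against $\s_\r(x)=\s((x-x_0)/\r)$ and assuming $\E_T(\{x_0\})=\e_0>0$, and discarding the favorable (negative) dissipation term $-2\nu\int|\n u|^2\s_\r\le 0$, I obtain
\begin{equation*}
\e_0 \leq \int_{|x-x_0|\leq\r}|u(t)|^2\,dx + \frac{C}{\r}\int_t^T\int_{|x-x_0|\leq\r}(|u|^3+|u||p|)\,dx\,d\t + \nu\int_t^T\int_{|x-x_0|\leq\r}|u|^2\,|\D\s_\r|\,dx\,d\t.
\end{equation*}

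The first two groups of terms are handled exactly as in \lem{l:atom}: with $E=\frac12\int|u|^2$ and $f(\t)=\|u(\t)\|_\infty$, the nonlinear flux is bounded by $\frac{CE}{\r}\int_t^T f\,d\t$. The new point is the viscous term. Since $|\D\s_\r|\lesssim \r^{-2}$ and is supported in $\{|x-x_0|\le\r\}$, the key is to avoid the crude bound $\int_{|x-x_0|\le\r}|u|^2\le 2E$ (which leaves a factor $\r^{-2}$ one cannot absorb) and instead trade one power of $|u|^2$ for the global energy via Cauchy--Schwarz:
\begin{equation*}
\int_{|x-x_0|\leq\r}|u|^2\,dx \leq f(\t)\int_{|x-x_0|\leq\r}|u|\,dx \leq f(\t)\,(C\r^N)^{1/2}(2E)^{1/2}.
\end{equation*}
For $N=3$ this yields
\begin{equation*}
\nu\int_t^T\int_{|x-x_0|\leq\r}|u|^2\,|\D\s_\r|\,dx\,d\t \lesssim \frac{\nu}{\r^2}\int_t^T f(\t)\,\r^{3/2}\sqrt{E}\,d\t = \nu\sqrt{E}\,\r^{-1/2}\int_t^T f(\t)\,d\t.
\end{equation*}

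Setting $F(t)=\int_t^T f\,d\t$ and choosing $\r=AF(t)$ exactly as in \lem{l:atom}, the flux becomes $\le CE/A$ while the viscous term is $\lesssim \nu\sqrt{E}\,A^{-1/2}F(t)^{1/2}$. Because $f\in L^1(0,T)$ we have $F(t)\ra 0$, so for each fixed $A$ the viscous contribution vanishes as $t\ra T$; this is the precise sense in which the linear term is subordinate. Taking $\liminf_{t\ra T}$ along the sequence furnished by \eqref{enlimit} then kills the first term as well, leaving $\e_0\lesssim A^{-1}$ for every $A>0$, a contradiction. The ``in particular'' clause follows since for $N=3$ the hypothesis $u\in L^{5/3}L^\infty=L^{\frac{2}{N}+1}L^\infty$ implies \eqref{enlimit} (the remark after \lem{l:atom}), while $u\in L^1L^\infty$ holds automatically by \cite{cfl}. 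I expect the only genuine obstacle to be the estimate of the viscous term: the naive energy bound fails, and one must use the Cauchy--Schwarz step above to produce the extra power $\r^{N/2}$ that, in dimension three, both overcomes the $\r^{-2}$ singularity of $\D\s_\r$ and leaves a vanishing factor $F(t)^{1/2}$.
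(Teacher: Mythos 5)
Your proof is correct, and it reaches the conclusion by the same overall strategy (rerun Lemma~\ref{l:atom}, drop the signed dissipation term, show the remaining viscous term is subordinate), but the key estimate of the term $\nu\int_t^T\int|u|^2\D\s_\r$ is genuinely different from the paper's. The paper bounds $\frac{1}{\r^2}\int_{B_\r}|u|^2\,dx$ by H\"older against the volume factor $\r^{2}$, i.e.\ by $\|u\|_{L^6}^2$, then invokes the Sobolev embedding $\|u\|_6\lesssim\|\n u\|_2$ and the finiteness of the global dissipation $\int_0^T\|\n u\|_2^2\,d\t$ to conclude that the viscous contribution is bounded \emph{independently of $\r$} by a tail $\int_t^T\|\n u\|_2^2\,d\t\to 0$. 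You instead trade one power of $|u|$ for $\|u(\t)\|_\infty$ and use Cauchy--Schwarz to extract $\r^{N/2}$, arriving at $\nu\sqrt{E}\,\r^{-1/2}F(t)$, which vanishes only after the specific choice $\r=AF(t)$ (giving $A^{-1/2}F(t)^{1/2}$). Both arguments are valid in dimension three. The paper's version is slightly more robust in that it does not use the $L^\infty$ norm at all for this term and gives a $\r$-uniform bound, exploiting the parabolic energy inequality; yours stays entirely within the $L^1L^\infty$ framework of Lemma~\ref{l:atom} and makes the ``subordination'' of the linear term quantitative ($O(F(t)^{1/2})$ versus the $O(1/A)$ main term), at the cost of requiring $N>2$ for the exponent $\r^{N/2-2}$ to be favorable. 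Your verification that the energy measure is well defined and the ``in particular'' clause are handled as in the paper.
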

\begin{proof} Two additional terms that appear on the right hand side of the energy equality are
\begin{equation}\label{ }
-\nu \int_t^T \int_{\O} | \n u|^2 \s_\r \, dx d\t  + \frac{\nu}{2} \int_t^T \int_{\O} |u|^2 \D \s_\r \, dx d\t.
\end{equation}
The first term has a negative sign,  so it can be dropped. For the second term we have the estimate
\[
\begin{split}
\int_t^T \int_{\O} |u|^2 \D \s_\r \, dx d\t & \leq \frac{1}{\r^2} \int_t^T \int_{\O} |u|^2 dx d\t \leq \int_t^T \left(\int_{\O} |u|^6 dx \right)^{1/3} d\t \\
&\leq \int_t^T \int_{\O} |\n u|^2 dx  d\t \ra 0,
\end{split}
\]
as $t \ra T$.

\end{proof}

A minor modification makes it possible to extend the conclusion of \thm{t:r} to larger values of $r$.
Let us note however that if $r \geq 2$, then $u$ satisfies the Prodi-Serrin regularity condition, and hence, \thm{t:r} holds trivially.
\begin{corollary} Suppose $u \in L^{r}([0,T); L^\infty)$, $5/3< r<7/4$,  and $u$ is a regular solution to the Navier-Stokes equation on the interval $[0,T)$. Then 
\[
 d_T \geq \frac{3r - 5}{r-1}.
\]

\end{corollary}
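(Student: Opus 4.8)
The plan is to re-run the proof of \lem{l:par} with $N=3$, keeping track of the two extra terms produced by the viscous term $\nu\D u$ in \eqref{nse}. The first observation is that, since $\n\cdot u = 0$, the field $\nu\D u$ is divergence free and does not enter the Poisson problem for the pressure; hence $p$ still satisfies \eqref{p}, and the entire pressure analysis of \lem{l:par} (the splitting $p=p_{loc}+p_1+p_2$ and the Calderon--Zygmund bound on $p_1$, together with the dyadic estimate on $p_2$) is unchanged. Thus the only structural modification of the recursion \eqref{ini} comes from the two viscous terms $-\,2\nu\int_t^{t_0}\int |\n u|^2\s_\r\,dx\,d\t$ and $\nu\int_t^{t_0}\int |u|^2\D\s_\r\,dx\,d\t$ that appear when one writes the local energy equality for \eqref{nse}.

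I would first discard the dissipation term: exactly as in the first corollary of this section, in the backward energy inequality leading to \eqref{enini} it carries a favorable (negative) sign and may simply be dropped. The remaining term is controlled using $|\D\s_\r|\lesssim \r^{-2}\mathbf{1}_{\{|x|\le\r\}}$, so that its contribution to \eqref{enini} is bounded by $\tfrac{C\nu}{\r^2}\int_t^{t_0} E(\t,2\r)\,d\t$. Adding this single term to \eqref{ini}, the nonlinear part of the iteration is carried out verbatim as in \lem{l:par}; with $N=3$ and the same choice $t_0\sim \r^{\frac{r}{r-1}+\d}$ it reproduces the main bound $\r^{N-\frac{2}{r-1}-\d}=\r^{\frac{3r-5}{r-1}-\d}$.

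For the new viscous term I would use the crudest possible estimate, replacing $E(\t,2\r)$ by the finite, time‑independent total energy $\|u\|_2^2$. With the above choice of $t_0$ this contributes
\[
\frac{C\nu}{\r^2}\int_t^{t_0}\|u\|_2^2\,d\t \;\lesssim\; \frac{\nu\,t_0}{\r^2}\,\|u\|_2^2 \;\sim\; \r^{\frac{r}{r-1}-2+\d} \;=\; \r^{\frac{2-r}{r-1}+\d},
\]
the logarithmic factor $K$ being absorbed by slightly enlarging $\d$. This linear contribution is subordinate to the nonlinear one precisely when its exponent dominates the target, i.e. when $\frac{2-r}{r-1}>N-\frac{2}{r-1}$; with $N=3$ this reads $\frac{4-r}{r-1}>3$, equivalently $r<\frac74$. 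Once the resulting analogue of \eqref{e:par} is established, the Hausdorff–dimension bound $d_T\ge\frac{3r-5}{r-1}$ follows by the same covering argument used to deduce \thm{t:r} from \lem{l:par}.

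The main obstacle, and the source of the upper endpoint, is exactly this balance of exponents: the restriction $r<\frac74$ is what makes the trivially bounded viscous term of strictly lower order than the nonlinear one. The lower endpoint $r>\frac53=\frac2N+1$ is inherited from \thm{t:r}, being the threshold below which $\frac{3r-5}{r-1}$ is vacuous, while $r\ge 2$ is excluded only because there $u$ already satisfies the Prodi--Serrin condition and is regular. I expect the wasteful step to be the replacement of $E(\t,2\r)$ by the total energy instead of iterating the viscous term as well; a genuinely parabolic treatment of $\tfrac{\nu}{\r^2}\int_t^{t_0} E(\t,2\r)\,d\t$ should push $\frac74$ toward $2$, but the elementary subordination above already yields the stated range and recovers the inviscid exponent of \thm{t:r}.
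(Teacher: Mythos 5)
Your proposal is correct and follows essentially the same route as the paper: drop the sign-definite dissipation term, bound the $\nu\int\!\!\int |u|^2\D\s_\r$ contribution crudely by $\nu t_0\|u\|_2^2/\r^2\sim\r^{\frac{2-r}{r-1}+\d}$ with the choice $t_0\sim\r^{\frac{r}{r-1}+\d}$, and observe that this is subordinate to the nonlinear bound $\r^{\frac{3r-5}{r-1}-\d}$ exactly when $r<7/4$. The paper phrases the iteration of the viscous term via the partial exponential sum $\frac{t_0}{\r^2}\bigl(1+F(t)/\r+\ldots+\frac{1}{M!}(F(t)/\r)^M\bigr)$, but since $F(t)/\r\to 0$ this is equivalent to your one-shot estimate.
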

\begin{proof}
Let us keep the original forward direction of time $t \ra T$. The initial energy inequality \eqref{ini} should be replaced with
\begin{equation*}
\begin{split}
E(t,\r) & \lesssim E(T-t_0,\r) + \frac{1}{\r} \mathrm{sum}\\
& -\nu \int_{T - t_0}^t \int_{\O} | \n u|^2 \s_\r \, dx d\t  + \frac{\nu}{2} \int_{T - t_0}^t \int_{\O} |u|^2 \D \s_\r \, dx d\t.
\end{split}
\end{equation*}
for all $T-t_0 < t<T$. The negative viscous term on the right hand side can be dropped as before, while the other viscous term can be estimated by
\[
\frac{\nu}{2} \int_{T - t_0}^t \int_{\O} |u|^2 \D \s_\r \, dx d\t \lesssim t_0/\r^2.
\]
Subsequent iterations of this term result in the sum
\[
\frac{t_0}{\r^2}( 1+ F(t)/\r + \ldots + \frac{1}{M!}(F(t)/ \r)^M ),
\]
which, given the choice of $t_0$, is comparable to $\r^{\frac{2-r}{r-1} + }$. In order for this term to be less than the required $\r^{\frac{3r-5}{r-1} - }$ the exponent $r$ has to satisfy $r<7/4$.

\end{proof}

Clearly from the proof, if $r\geq7/4$, then the dimension becomes $d < \frac{2-r}{r-1}$. However, it is somewhat unnatural that it becomes smaller, hence more singular, as $r$ approaches its regularity threshold value $r = 2$.

The local merger results of Section~\ref{s:merger} carry over backward in time. This is because the basic energy inequality \eqref{en-back} has time direction reversed. Thus, assuming that the merger time is $0$, and for $t>0$, the extra viscous term that appears on the right hand side is
\[
\frac{\nu}{2} \int_{0}^t \int_{\O} |w|^2 \D \s_\r \, dx d\t \leq \frac{C}{\r^2} \int_0^t E(\t,\r) \, d\t.
\]
This term further appears in \eqref{vanen3}, and does not effect the rest of the proof.

\end{document}